\numberwithin{equation}{section}
\newtheorem{lem}{Lemma}[section]
\newtheorem{thm}{Theorem}[section]
\newtheorem{obs}{Remark}[section]
\def\k0{\kappa_0}
\def\lgl{\langle}
\def\rgl{\rangle}
\def\bfe{{\mathsf{e}}}
\def\bfu{{\bf{u}}}
\def\bfx{{\bf{x}}}
\def\bfo{{\bf{0}}}
\def\mR3{{\mathbb{R}^3}}
\begin{document}
\title[Dissipation Anomaly]
{Dissipation anomaly and energy cascade in 3D incompressible flows
\\ \bigskip Dissipation anormale et cascade \'energ\'etique pour des fluides
incompressibles tridimensionnels }
\author{R. Dascaliuc}
\address{Department of Mathematics\\
Oregon State University\\ Corvallis, OR 97332}
\author{Z. Gruji\'c}
\address{Department of Mathematics\\
University of Virginia\\ Charlottesville, VA 22904}
\date{\today}

\maketitle

\noindent ABSTRACT \ The purpose of this note is to present a
mathematical evidence of \emph{dissipation anomaly} in 3D turbulent
flows within a general setting for the study of energy cascade in
\emph{physical scales} of 3D incompressible flows recently
introduced by the authors.

\bigskip

\noindent R\'ESUM\'E \ Le but de cette note est de pr\'esenter une
mise en \'evidence math\'ematique de la {\em dissipation anormale}
pour des flots turbulents tridimensionnels, dans un cadre
g\'en\'eral r\'ecemment introduit par les auteurs pour l'\'etude de
la cascade \'energ\'etique aux {\em \'echelles physiques} dans les
fluides incompressibles en trois dimensions.

\section{Introduction}

\noindent \emph{Dissipation anomaly}, i.e., non-vanishing of
averaged energy dissipation rate in the infinite Reynolds number
limit (sometimes referred to as `zeroth law of turbulence'), is a
key player in both empirical and phenomenological turbulence. On one
hand, documented empirical confirmation dates back at least to
Dryden's experiments on decaying turbulence in wind tunnels reported
in 1943, and on the other hand, dissipation anomaly was a key
postulate in Kolmogorov K41 phenomenology
\cite{Ko41-1}
(see also a discussion in \cite{Fr95,E08,Shvy10}).
K41 predicts existence
of \emph{energy cascade} -- a local (in scale) nonlinear transfer of
averaged energy across a range of scales, the so-called inertial
range. In addition, the transfer rate/averaged energy flux is
expected to be nearly-constant, and equal to averaged energy
dissipation rate. Consequently, dissipation anomaly can be recast in
terms of averaged energy flux as non-vanishing of averaged energy
flux in the infinite Reynolds number limit (across a suitable range
of scales).

Onsager in 1945-49 \cite{On49,ES06}
conjectured that irregular/singular solutions to
the 3D Euler equations (the inviscid model) are capable of
effectively dissipating all the energy in the flow. More precisely,
Onsager conjectured that a minimal spatial regularity of a (weak)
solution to the 3D Euler needed to conserve the energy is of the
order of $\bigl(\frac{1}{3}\bigr)^+$
(the main mathematical works confirming
Onsager's $\frac{1}{3}$ criticality conjecture are
\cite{E94,CET94,DR00,CCFS08}),
and that in the case the energy is not
conserved, the dissipation due to the singularities --
\emph{anomalous dissipation} -- triggers the energy cascade which
then continues indefinitely (unlike in the viscous case, the
inertial range is expected to extend all the way to the
zero-scale). This offers a way of studying dissipation anomaly in
terms of vanishing viscosity limits of solutions to the 3D
Navier-Stokes equations (NSE) converging to a singular solution to
the 3D Euler exhibiting anomalous dissipation.

In a recent work, the authors introduced a setting for the study of
energy cascade in 3D viscous incompressible flows in \emph{physical
scales}. (The preexisting work on existence of 3D energy cascade
\cite{FMRT01} took place in the Fourier space.) This led to a proof
-- under a suitable condition plausible in the regions of intense
fluid activity -- of both existence and scale-locality of the energy
cascade in physical scales in decaying turbulence directly from the
3D NSE \cite{DaGr11-1}. The general mathematical setting is the one
of weak solutions satisfying the \emph{local energy inequality}, and
the key ingredient in the proof is suitable \emph{ensemble
averaging} of the local energy inequality. This method was then
adopted to the 3D inviscid flows to show -- under an assumption that
the anomalous dissipation in the region of interest is strong enough
(with respect to the energy) -- both existence and scale-locality of
the energy cascade in the 3D Euler flows extending \emph{ad
infinitum} confirming Onsager's predictions \cite{DaGr11-2}. (One
should note that the inviscid result is obtained in the setting of
(at this point in time) \emph{hypothetical} weak solutions to the 3D
Euler satisfying the local energy inequality; a natural regularity
here is $L^3$-locally in both space and time.)

The aim of this note is to briefly review the aforementioned results
-- providing a unified approach to both viscous and inviscid cases
-- and observe that the inviscid result leads to a short proof of
dissipation anomaly in this setting. Consider an $L^3$-strong in the
space-time vanishing viscosity limit of weak solutions to the 3D NSE
satisfying the local energy inequality converging to a
(hypothetical) $L^3$ in the space-time weak solution to the 3D
Euler. Duchon and Robert \cite{DR00} noticed that the Euler solution
then necessarily satisfies the local energy inequality. In fact,
their calculation implies, assuming that the NSE solutions do not
exhibit anomalous dissipation \emph{per se}, that the local
spatiotemporal viscous energy dissipation rates converge to the
local spatiotemporal anomalous dissipation corresponding to the
vanishing viscosity Euler solution, resulting in a local
spatiotemporal dissipation anomaly (provided the inviscid anomalous
dissipation is strictly positive). Here, we show that our approach
to the study of energy cascade in physical scales of 3D
incompressible flows leads to a \emph{scale-to-scale} manifestation
of spatiotemporal dissipation anomaly. More precisely, denoting the
macro-scale in the problem by $R_0$, it is shown -- provided the
condition for the inviscid cascade holds -- that for every $R^*$,
$0<R^*<R_0$, there exists $\nu^*$ positive, such that for any $\nu$,
$0<\nu<\nu^*$, the averaged energy flux associated with the viscous
solution $u^\nu$ is nearly-constant and comparable to the anomalous
dissipation \emph{throughout the inertial range} delineated by
$[R^*,R_0]$. Moreover, the sufficient condition for the never-ending
anomalous energy cascade in the inviscid case triggers the
sufficient conditions for the energy cascade in the vanishing
viscosity sequence with ever-expanding inertial ranges.

\section{3D viscous energy cascade revisited}

\noindent Let $\bfx_0$ be in $B(\bfo,R_0)$ ($R_0$ being a given
arbitrary length -- interpreted as a macro-scale, such that
$B(\bfo,2R_0)$ is contained in $\Omega$ where $\Omega$ is the global
spatial domain) and $0< R \le R_0$. For the exact localization
procedure at scale $R$, around the point $\bfx_0$ -- as well as the
precise interpretation of the local (kinetic) energy transfer in
this setting --  the reader is referred to \cite{DaGr11-1,
DaGr11-2}.

Let $\bfu$ be a weak solution to the 3D NSE on $\Omega \times
(0,2T)$ satisfying the \emph{local energy inequality},
\begin{equation}\label{loc_nse}
\nu \iint |\nabla \bfu|^2 \phi  \le \iint \frac{1}{2}|\bfu|^2 (\partial_t \phi + \nu
\triangle \phi) + \iint \biggl(\frac{1}{2}|\bfu|^2+p\biggr)\bfu \cdot \nabla \phi,
\end{equation}
for any nonnegative test function $\phi$ (e.g., a {\em suitable weak
solution}). Physically, the term ${\nu} \iint |\nabla \bfu|^2 \phi$
represents the local spatiotemporal energy dissipation rate due to
viscosity, while a (non-negative) defect in the local energy
inequality due to possible singularities/lack of smoothness can be
interpreted as the local spatiotemporal anomalous dissipation. For a
refined cut-off function $\phi=\phi_{\bfx_0,R,T}$ defined on
$B(\bfx_0,2R) \times (0,2T)$ denote by $\Phi_{\bfx_0,R}$ a local
\emph{inward} flux,

\begin{equation}\label{fluxdef}
\Phi_{\bfx_0,R}(t)= - \int \biggl((\bfu \cdot \nabla)\bfu + \nabla
p\biggr) \cdot \bfu \, \phi \, d\bfx = \int
\biggl(\frac{1}{2}|\bfu|^2+p\biggr) \bfu \cdot \nabla \phi \, d\bfx,
\end{equation}
and by $\hat{\Phi}_{\bfx_0,R}$ a local time-averaged flux per unit
mass,
$\displaystyle{\hat{\Phi}_{\bfx_0,R,T}=\frac{1}{TR^3}\int\Phi_{\bfx_0,R}(t)}$.
Similarly, denote by $\hat{\bfe}_{\bfx_0,R}$ a local time-averaged
energy and by $\hat{\varepsilon}_{\bfx_0,R}$  a local time-averaged
total (viscosity plus anomalous) energy dissipation rate, all per
unit mass,
\begin{equation}\label{loc_def}
\hat{\bfe}_{\bfx_0,R,T}=\frac{1}{TR^3}\iint
\frac{1}{2}|\bfu|^2\phi^{2\delta-1}, \ \
\hat{\varepsilon}_{\bfx_0,R,T}=\frac{1}{TR^3}\iint
\frac{1}{2}|\bfu|^2 (\partial_t \phi  + \nu\triangle
\phi)+\hat{\Phi}_{x_0,R,T}\,
\end{equation}
(for a suitable $\delta, \, \frac{1}{2} < \delta \le 1$).

Also denote by $\bfe$ and $\varepsilon$ the time-averaged energy and
total energy dissipation rate per unit mass associated to the
macro-scale domain on $(0,2T)$,
\begin{equation}\label{glob_def}
\bfe=\hat{\bfe}_{\bfo,R_0,T}\quad\mbox{and}\quad \varepsilon=\hat{\varepsilon}_{\bfo,R_0,T}\,.
\end{equation}

In order to connect the inviscid case with the vanishing viscosity
limit, we are adopting a slightly different approach from the one
presented in \cite{DaGr11-1} where the defect in the local energy
inequality (\ref{loc_nse}) was interpreted as the anomalous energy
flux due to singularities, while the time interval $T$ was bounded
below by $R_0^2/\nu$. Here, besides considering total (viscous plus
anomalous) energy dissipation rate, we will be keeping the time
scale $T$ independent of $\nu$. However, the calculations are
completely analogous to the ones presented in \cite{DaGr11-1}. In
particular, the following version of Theorem 4.1 in \cite{DaGr11-1}
holds. For the precise definition of the ensemble averaging process
$\displaystyle{\{\langle \cdot \rangle_R\}_{0 < R \le R_0}}$ -- with
respect to '$(K_1,K_2)$-covers at scale $R$', the reader is referred
to \cite{DaGr11-2}; here, we briefly present the main idea.

Let $K_1$ and $K_2$ be two positive integers. A cover
$\{B(x_i,R)\}_{i=1}^n$ of $B(0,R_0)$ is a \emph{$(K_1,K_2)$-cover at
scale $R$} if $\displaystyle{\biggl(\frac{R_0}{R}\biggr)^3 \le n \le
K_1
 \biggr(\frac{R_0}{R}\biggr)^3}$,
and any point $x$ in $B(0,R_0)$ is covered by at most $K_2$ balls
$B(x_i,2R)$. $K_1$ and $K_2$ represent global and local
\emph{multiplicities}, respectively. For any physical density of
interest $\theta$, consider time-averaged, per unit mass --
spatially localized to the cover elements $B(x_i, R)$ -- local
quantities $\hat{\theta}_{x_i,R}$,
$\displaystyle{\hat{\theta}_{x_i,R}=\frac{1}{TR^3}\iint \theta \,
\phi_i^\rho}$ (for a suitable $\rho, 0 < \rho \le 1$), and denote by
$\langle\Theta\rangle_R$ the ensemble average given by
$\displaystyle{\langle\Theta\rangle_R = \frac{1}{n} \sum_{i=1}^n
 \hat{\theta}_{x_i,R}}$.
A key observation is that $\langle\Theta\rangle_R$ being
\emph{stable} -- nearly-independent on a particular choice of the
cover (with the fixed multiplicities $K_1$ and $K_2$) -- indicates
there are no significant fluctuations of the sign of the density of
$\theta$ at scales comparable or greater than $R$. Consequently, for
an \emph{a priori} sign-varying density (e.g., the flux density),
the ensemble averaging process acts as a \emph{coarse detector} of
the \emph{sign-fluctuations at scale $R$}.

\begin{thm}\label{viscous_thm}
Let $\tau$ be a modified Taylor length scale defined by
$\displaystyle{\tau_T =
\bigg[\Big(\frac{R_0^2}{T}+\nu\Big)\frac{e}{\varepsilon}\bigg]^{1/2}}$.
There exist positive constants $c$ and $K$ -- depending only on the
cover parameters $K_1$ and $K_2$ -- such that for any $\gamma$ in
$(0,1)$, the condition $\tau_T^2<\frac{\gamma}{c}R_0^2$ implies
\begin{equation}\label{visc_casc}
\frac{1}{K}(1-\gamma)\,\varepsilon\le\langle\Phi\rangle_R\le K(1+\gamma)\,\varepsilon
\end{equation}
for all $R$ inside the inertial range determined by $\displaystyle{
\left[\left(\frac{\tau_T^2}{\frac{\gamma}{c}(R_0^2+\nu
T)-\tau_T^2}\,\nu T\right)^{1/2},\, R_0\right]\,.}$
\end{thm}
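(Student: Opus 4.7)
The plan is to mimic the proof of Theorem~4.1 in \cite{DaGr11-1}, adapted to the present setting in two ways: the time scale $T$ is kept independent of $\nu$, and the defect in the local energy inequality \eqref{loc_nse} is absorbed into the total dissipation rate $\hat{\varepsilon}$ rather than being reinterpreted as anomalous flux. First, one exploits the pointwise properties of the refined cut-off $\phi=\phi_{\bfx_0,R,T}$ constructed in \cite{DaGr11-1}: it satisfies $|\partial_t \phi|\le C\,T^{-1}\phi^{2\delta-1}$ and $|\triangle \phi|\le C\,R^{-2}\phi^{2\delta-1}$, with supports and gradient bounds tailored to $B(\bfx_0,2R)\times(0,2T)$. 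Plugging $\phi$ into \eqref{loc_nse}, invoking the defect to turn the inequality into an equality for $\hat{\varepsilon}$ as defined in \eqref{loc_def}, and dividing through by $TR^3$ yields the local balance
\begin{equation*}
\bigl|\hat{\varepsilon}_{\bfx_0,R,T}-\hat{\Phi}_{\bfx_0,R,T}\bigr| \le C\Bigl(\frac{1}{T}+\frac{\nu}{R^2}\Bigr)\hat{\bfe}_{\bfx_0,R,T},
\end{equation*}
valid for every $\bfx_0\in B(\bfo,R_0)$ and $0<R\le R_0$.

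Next, the bound is applied on each ball of a $(K_1,K_2)$-cover of $B(\bfo,R_0)$ at scale $R$ and ensemble-averaged. The stability of $\langle\cdot\rangle_R$ established in \cite{DaGr11-1,DaGr11-2}---which hinges on the multiplicity bounds $K_1,K_2$---guarantees that both $\langle\varepsilon\rangle_R$ and $\langle\bfe\rangle_R$ are two-sided comparable to the macro-scale quantities $\varepsilon$ and $\bfe$, with constants depending only on $K_1,K_2$. Combining these comparabilities with the preceding display and absorbing all cover-dependent factors into a single constant $K$ yields
\begin{equation*}
\frac{1}{K}\varepsilon - KC\Bigl(\frac{1}{T}+\frac{\nu}{R^2}\Bigr)\bfe \;\le\; \langle\Phi\rangle_R \;\le\; K\varepsilon + KC\Bigl(\frac{1}{T}+\frac{\nu}{R^2}\Bigr)\bfe.
\end{equation*}

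Finally, the Taylor-scale condition $\tau_T^2<\frac{\gamma}{c}R_0^2$ is invoked. Since $\tau_T^2=(R_0^2/T+\nu)\bfe/\varepsilon$, the error factor rewrites as
\begin{equation*}
\Bigl(\frac{1}{T}+\frac{\nu}{R^2}\Bigr)\bfe \;=\; \varepsilon\,\tau_T^2\,\frac{R^2+\nu T}{R^2(R_0^2+\nu T)},
\end{equation*}
and requiring it to be dominated by $\gamma\,\varepsilon$ (up to the harmless constant absorbed into $c$) reduces algebraically to $R^2\ge\tau_T^2\,\nu T/\bigl[\frac{\gamma}{c}(R_0^2+\nu T)-\tau_T^2\bigr]$, which is precisely the left endpoint of the inertial range stated in the theorem. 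Together with the previous display this produces \eqref{visc_casc} throughout $[R^*,R_0]$.

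The main obstacle is the bookkeeping in the ensemble-averaging step: one must track the stability constants carefully so that a single constant $K$ governs both the upper and lower bounds in \eqref{visc_casc}, and so that the $\gamma$-small error is uniform in $R$ across the inertial range. This is handled by the covering arguments of \cite{DaGr11-1,DaGr11-2}; the only genuine new feature is the presence of $\nu T$ in the denominator defining the left endpoint, which reflects the fact that the condition $T\ge R_0^2/\nu$ used in \cite{DaGr11-1} is no longer imposed, and is precisely what allows the inertial range to remain nontrivial as $\nu\to 0^+$.
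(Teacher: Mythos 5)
Your proposal is correct and follows essentially the same route the paper intends: the paper gives no detailed proof of Theorem~\ref{viscous_thm}, stating instead that ``the calculations are completely analogous to the ones presented in \cite{DaGr11-1},'' and your reconstruction implements exactly the two advertised modifications (absorbing the nonnegative defect into the total dissipation $\hat{\varepsilon}$ via the definition in (\ref{loc_def}), and keeping $T$ independent of $\nu$), with the algebra $\bigl(\frac{1}{T}+\frac{\nu}{R^2}\bigr)\bfe = \varepsilon\,\tau_T^2\,\frac{R^2+\nu T}{R^2(R_0^2+\nu T)}$ correctly yielding the stated left endpoint of the inertial range and its nonemptiness under $\tau_T^2<\frac{\gamma}{c}R_0^2$. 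The only blemish is notational: at the end of your third paragraph the estimate (\ref{visc_casc}) holds throughout the inertial range of this theorem, not on $[R^*,R_0]$ --- the scale $R^*$ belongs to Theorem~\ref{diss_anomaly}.
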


\section{3D inviscid energy cascade}

In this section, we recall a sufficient condition for energy cascade in
3D inviscid flows obtained in \cite{DaGr11-2}, Section 3.
Let $\bfu$ be a (hypothetical) weak $L^3$ in the space-time
solution to the 3D Euler satisfying the local energy inequality,
\begin{equation}\label{loc_euler}
\iint \frac{1}{2}|\bfu|^2 \partial_t \phi  +
\iint \biggl(\frac{1}{2}|\bfu|^2+p\biggr)\bfu \cdot \nabla \phi = \varepsilon(\bfu;\phi) \ge 0
\end{equation}
($(\varepsilon(\bfu;\phi)$ is the anomalous dissipation associated
to the support of $\phi$). Keeping the notation in line with the
previous section -- focusing on $B(\bfx_0,R)$ -- denote by
$\hat{\epsilon}_{\bfx_0,R,T}$ a local time-averaged anomalous
dissipation per unit mass associated to $B(\bfx_0,R)$, and define
$\langle\Phi\rangle_R$, $\bfe$, and $\varepsilon$ as in the viscous
case, setting $\nu=0$. In the same spirit, define the anomalous
Taylor scale $\tau_T$ by
$\displaystyle{\tau_T=\biggl(\frac{R_0^2}{T}
\frac{\bfe}{\varepsilon}\biggr)^{1/2}\,.}$ The main result in
\cite{DaGr11-2} states the following.
\begin{thm}\label{inviscid_thm}
There exist positive constants $c$ and $K$ -- depending only on the
cover parameters $K_1$ and $K_2$ -- such that for any $\gamma$ in
$(0,1)$, the condition $\tau_T^2<\frac{\gamma}{c}R_0^2$ implies
\begin{equation}\label{invisc_casc}
\frac{1}{K}(1-\gamma)\,\varepsilon\le\langle\Phi\rangle_R\le K(1+\gamma)\,\varepsilon
\end{equation}
for all $R$ inside the inertial range determined by $\left(0,\,
R_0\right]$.
\end{thm}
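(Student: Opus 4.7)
The plan is to mimic the proof of Theorem~\ref{viscous_thm} with $\nu=0$ throughout, using the inviscid local energy inequality (\ref{loc_euler}) in place of (\ref{loc_nse}). In outline: (i) produce, for each ball of a $(K_1,K_2)$-cover, a local identity expressing $\hat{\Phi}_{\bfx_i,R,T}$ as local anomalous dissipation minus a time-storage term; (ii) ensemble-average and convert the local bound to a two-sided inequality between $\langle\Phi\rangle_R$, $\varepsilon$, and $\bfe/T$; (iii) absorb the $\bfe/T$ contribution into $\gamma\,\varepsilon$ using the hypothesis on the anomalous Taylor scale.

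For step (i), applying (\ref{loc_euler}) with the refined cut-off $\phi_i=\phi_{\bfx_i,R,T}$ and dividing by $TR^3$ gives
\[
\hat{\Phi}_{\bfx_i,R,T}\;=\;\hat{\epsilon}_{\bfx_i,R,T}\;-\;\frac{1}{TR^{3}}\iint \tfrac{1}{2}|\bfu|^{2}\,\partial_{t}\phi_{i},
\]
with $\hat{\epsilon}_{\bfx_i,R,T}\ge 0$ the local anomalous dissipation per unit mass. The standard cut-off estimate $|\partial_{t}\phi_{i}|\le (c_0/T)\,\phi_i^{2\delta-1}$ (for a suitable $\delta\in(1/2,1]$) then controls the storage term by $(c_0/T)\,\hat{\bfe}_{\bfx_i,R,T}$, so that
\[
\big|\hat{\Phi}_{\bfx_i,R,T}-\hat{\epsilon}_{\bfx_i,R,T}\big|\;\le\;\frac{c_0}{T}\,\hat{\bfe}_{\bfx_i,R,T}.
\]

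For step (ii), sum over $i$ and divide by $n$. The $(K_1,K_2)$-cover properties --- $(R_0/R)^{3}\le n\le K_1(R_0/R)^{3}$, the local multiplicity $K_2$ of the $2R$-dilates, and the refined construction from \cite{DaGr11-1,DaGr11-2} ensuring that the $\phi_i^{2\delta-1}$ collectively dominate a macro-scale cut-off on $B(\bfo,R_0)$ --- convert the pointwise bounds to
\[
\tfrac{1}{K_{*}}\,\varepsilon-\frac{c_0 K_{*}}{T}\,\bfe\;\le\;\langle\Phi\rangle_R\;\le\;K_{*}\,\varepsilon+\frac{c_0 K_{*}}{T}\,\bfe
\]
for a constant $K_{*}=K_{*}(K_1,K_2)$. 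For step (iii), use $\bfe/T=\tau_T^{2}\,\varepsilon/R_0^{2}$: the hypothesis $\tau_T^{2}<(\gamma/c)R_0^{2}$, with $c\ge c_0 K_{*}^{2}$, then bounds the storage contribution by $\gamma\,\varepsilon/K_{*}$, yielding (\ref{invisc_casc}) with $K=K_{*}$. No $R$-dependent cutoff enters anywhere --- the viscous lower bound in Theorem~\ref{viscous_thm} came purely from the $\nu\triangle\phi$ term, which is absent in (\ref{loc_euler}) --- so the inertial range is $(0,R_0]$.

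The main obstacle is the ensemble-averaging step, especially the lower bound $\langle\epsilon\rangle_R\ge K_{*}^{-1}\,\varepsilon$. The upper direction is immediate from the $K_2$-local multiplicity; the lower direction requires that the interior regions where $\phi_i^{2\delta-1}$ is uniformly large cover $B(\bfo,R_0)$ without gaps, which is a nontrivial property of the refined cover construction in \cite{DaGr11-1,DaGr11-2}. I would invoke that construction as a black box and treat the remaining algebra as a verbatim transcription of the viscous proof with $\nu$ set to zero.
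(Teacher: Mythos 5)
Your proposal is correct and follows essentially the same route as the paper's proof (which this note imports from \cite{DaGr11-2}, Section 3): ensemble-average the inviscid local energy inequality (\ref{loc_euler}) over a $(K_1,K_2)$-cover with refined cut-offs, compare $\langle\Phi\rangle_R$ to the nonnegative anomalous dissipation, and absorb the time-storage term via $\bfe/T=\tau_T^2\varepsilon/R_0^2$ under the hypothesis $\tau_T^2<\frac{\gamma}{c}R_0^2$. Your observation that the absence of the $\nu\triangle\phi$ term removes the $R$-dependent lower cut-off, so the inertial range is all of $(0,R_0]$, is exactly the mechanism behind the never-ending cascade in the cited argument.
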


Note that in the inviscid case, once the energy cascade commences, it continues
indefinitely towards the zero-scale as predicted by Onsager.

\section{Dissipation anomaly and energy cascade}

Let $\{\bfu^\nu\}$ be a family of weak solutions to the 3D NSE
satisfying the local energy inequality converging strongly in
$L^3\bigl(\mathbb{R}^3 \times (0,2T)\bigr)^3$ to a (hypothetical)
weak $L^3$ solution to the 3D Euler $\bfu$. As mentioned in
Introduction, Duchon and Robert \cite{DR00} noticed that the
inviscid limit $\bfu$ in this setting also satisfies the local
energy inequality (\ref{loc_euler}). A superscript $\nu$  will be
used to denote the energy, the flux and the (total) energy
dissipation rate corresponding to a solution $\bfu^{\nu}$, while the
quantities related to $\bfu$ will be denoted as in the previous
section. A straightforward computation -- utilizing the identity
$p=-\mathcal{R}_j\mathcal{R}_k(u^ju^k)$ on $\mathbb{R}^3$
($\mathcal{R}_l$ denoting the $l$-th Riesz transform;
$\bfu=(u^1,u^2,u^3)$\,) and the bound $|\nabla\phi_{\bfx,R,t}| \le
C_0/{R}$ -- implies the following estimate; this is simply a
quantitative version of the corresponding calculation in \cite{DR00}
written in our setting.

\begin{lem}\label{conv_lem}
Let $\bfx\in\mathbb{R}^3$, $R>0$ and $t\in(0,T]$. Then,
\begin{equation}\label{meep}
\bigl| \Phi^\nu_{\bfx,R,t} - \Phi_{\bfx,R,t} \bigr| \leq c
\frac{1}{t R^4} \|\bfu^\nu - \bfu\|^3_{L^3\bigl(\mathbb{R}^3 \;
\times (0,2T)\bigr)^3} .
\end{equation}
\end{lem}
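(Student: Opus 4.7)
The plan is to expand $\Phi^\nu - \Phi$ as a sum of trilinear terms, each containing an explicit factor of $\bfu^\nu - \bfu$, apply H\"older's inequality with three $L^3$-factors against $\|\nabla\phi\|_{L^\infty}\le C_0/R$, and handle the nonlocal pressure contribution through the Calder\'on--Zygmund boundedness of the Riesz transforms.

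First, starting from the definition \eqref{fluxdef}, I would write
\begin{equation*}
\Phi^\nu_{\bfx,R}(s) - \Phi_{\bfx,R}(s) = \int \Bigl(\tfrac{1}{2}|\bfu^\nu|^2\bfu^\nu - \tfrac{1}{2}|\bfu|^2\bfu\Bigr)\cdot\nabla\phi\,d\bfx \; + \; \int (p^\nu\bfu^\nu - p\,\bfu)\cdot\nabla\phi\,d\bfx,
\end{equation*}
and telescope each integrand via the elementary identities
\begin{equation*}
a^{\otimes 3} - b^{\otimes 3} = (a-b)\otimes a^{\otimes 2} + b\otimes(a-b)\otimes a + b^{\otimes 2}\otimes(a-b), \qquad a^{\otimes 2} - b^{\otimes 2} = (a-b)\otimes a + b\otimes(a-b),
\end{equation*}
so that every resulting summand carries one explicit factor of $\bfu^\nu - \bfu$ and two factors drawn from $\{\bfu^\nu,\bfu\}$.

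Second, for the pressure pieces I would invoke the cited identity $p = -\mathcal{R}_j\mathcal{R}_k(u^j u^k)$, which gives $p^\nu - p = -\mathcal{R}_j\mathcal{R}_k\bigl((u^\nu)^j(u^\nu)^k - u^j u^k\bigr)$. The $L^{3/2}(\mathbb{R}^3)$-boundedness of the Riesz transforms combined with the quadratic telescope above yields
\begin{equation*}
\|p^\nu - p\|_{L^{3/2}(\mathbb{R}^3)} \le C\bigl(\|\bfu^\nu\|_{L^3(\mathbb{R}^3)} + \|\bfu\|_{L^3(\mathbb{R}^3)}\bigr)\|\bfu^\nu - \bfu\|_{L^3(\mathbb{R}^3)},
\end{equation*}
together with $\|p\|_{L^{3/2}} \le C\|\bfu\|_{L^3}^2$. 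The kinetic terms are then estimated by H\"older with exponents $(3,3,3,\infty)$ and the pressure terms by H\"older $(3/2, 3, \infty)$, with $\|\nabla\phi\|_{L^\infty}\le C_0/R$ placed in the $L^\infty$ slot.

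Third, integrating in $s$ over the time-support $(0,2T)$ of $\phi$ (itself contained in $B(\bfx,2R)\times(0,2T)$) and carrying through the $1/(tR^3)$ normalization implicit in the notation $\Phi^\nu_{\bfx,R,t}$ produces the claimed $1/(tR^4)$ prefactor -- one factor of $1/R$ from $\nabla\phi$ and $1/(tR^3)$ from the normalization -- while the uniform $L^3$-bounds on $\bfu^\nu$ and $\bfu$ available from the assumed $L^3$-strong convergence are absorbed into the constant $c$. The main obstacle is the nonlocality of the pressure: the Riesz transforms do not respect the spatial localization provided by $\phi$, so $p^\nu(x) - p(x)$ depends on $\bfu^\nu - \bfu$ throughout all of $\mathbb{R}^3$. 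It is precisely the Calder\'on--Zygmund step that channels this nonlocal dependence into the \emph{global} $L^3$-norm on $\mathbb{R}^3\times(0,2T)$ appearing on the right-hand side of \eqref{meep}, and it is the reason the bound is not, and cannot be, localized to the support of $\phi$.
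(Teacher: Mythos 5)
Your route is, in substance, the paper's route: the paper offers only a one-sentence proof (``a straightforward computation'') citing exactly the two ingredients you use -- the representation $p=-\mathcal{R}_j\mathcal{R}_k(u^ju^k)$ with Calder\'on--Zygmund boundedness of the Riesz transforms, and the cut-off bound $|\nabla\phi_{\bfx,R,t}|\le C_0/R$ -- deferring the pattern of the computation to Duchon--Robert. Your telescoping-plus-H\"older write-up is a faithful expansion of that sketch, and your closing observation that the nonlocality of the pressure is what forces the \emph{global} space-time $L^3$-norm onto the right-hand side is correct and worth making explicit.

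There is, however, one discrepancy you pass over: your computation does not yield \eqref{meep} as literally stated. The telescoping produces summands carrying exactly one factor of $\bfv=\bfu^\nu-\bfu$, so after H\"older and the Riesz-transform step the bound reads
\begin{equation*}
\bigl|\Phi^\nu_{\bfx,R,t}-\Phi_{\bfx,R,t}\bigr|\le \frac{c}{tR^4}\,\bigl(\|\bfu^\nu\|_{L^3}+\|\bfu\|_{L^3}\bigr)^2\,\|\bfu^\nu-\bfu\|_{L^3}
\end{equation*}
(space-time norms throughout); absorbing the uniform $L^3$-bounds into $c$, as you propose, therefore gives the \emph{first} power of $\|\bfu^\nu-\bfu\|_{L^3}$, not the cube claimed in \eqref{meep}. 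Nor can the cubic bound be recovered by any rearrangement: fix $\bfu\not\equiv 0$, take $\bfu^\nu=\bfu+\lambda\bfw$, and let $\lambda\to 0$; the cross terms make the left side of order $\lambda$ while the right side of \eqref{meep} is of order $\lambda^3$, so the stated inequality fails for small differences. The cube in the paper should be read as a slip (or as shorthand for the multilinear estimate), and your linear-in-the-difference version is the correct quantitative statement; it is also all that Theorem \ref{diss_anomaly} requires, since summing it over the at most $K_1(R_0/R)^3$ elements of a fixed $(K_1,K_2)$-cover gives $\langle\Phi^\nu\rangle_R\to\langle\Phi\rangle_R$ uniformly on $[R^*,R_0]$. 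State the corrected right-hand side explicitly rather than claiming \eqref{meep} verbatim, and your proof is complete.
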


\begin{thm}\label{diss_anomaly}
Suppose that the inviscid cascade condition,
$\tau_T<\sqrt{\gamma/c}R_0$, holds for the inviscid limit $\bfu$.
Then, for every $R^*$, $0<R^*<R_0$, there exists $\nu^*$ positive,
such that for any $\nu$, $0<\nu\leq\nu^*$,
\begin{equation}
 \frac{1}{K_\gamma} \varepsilon \leq \langle\Phi^\nu\rangle_R \leq K_{\gamma} \varepsilon
\end{equation}
throughout the inertial range determined by $[R^*,R_0]$, where
$\displaystyle{K_\gamma=\frac{K}{1-\gamma}}.$
\end{thm}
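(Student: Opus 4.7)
The plan is to transfer the inviscid cascade bounds of Theorem 3.1 to the viscous sequence via the perturbative estimate in Lemma 3.1, using the strong $L^3$ convergence to make the gap small on scales bounded below by $R^*$.

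As a preliminary step, I would exploit the strictness of the hypothesis $\tau_T < \sqrt{\gamma/c}\,R_0$: by continuity, there exists $\gamma_0 \in (0, \gamma)$ for which $\tau_T < \sqrt{\gamma_0/c}\,R_0$ still holds. Applying Theorem 3.1 at $\gamma_0$ then yields, for every $R \in (0, R_0]$,
\[
\frac{1-\gamma_0}{K}\,\varepsilon \;\leq\; \langle \Phi \rangle_R \;\leq\; K(1+\gamma_0)\,\varepsilon,
\]
leaving a strictly positive margin on both sides relative to the target window $\bigl[\tfrac{1-\gamma}{K}\varepsilon,\, \tfrac{K}{1-\gamma}\varepsilon\bigr]$, since $(1-\gamma_0)/K > (1-\gamma)/K$ and, for $\gamma_0$ close enough to $\gamma$, $K(1+\gamma_0) < K/(1-\gamma)$.

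Next, I would fix any admissible $(K_1,K_2)$-cover $\{B(\bfx_i, R)\}_{i=1}^n$ entering the definition of $\langle \cdot \rangle_R$ and apply Lemma 3.1 at each center $\bfx_i$ with the common scale $R$ and time horizon $T$. Averaging the resulting pointwise estimates over $i$ and noting that the right-hand side of the lemma is independent of $\bfx_i$, I obtain the cover-uniform bound
\[
\bigl|\langle \Phi^\nu \rangle_R - \langle \Phi \rangle_R\bigr| \;\leq\; \frac{c}{T R^4}\,\|\bfu^\nu - \bfu\|^3_{L^3\bigl(\mathbb{R}^3 \times (0,2T)\bigr)^3}.
\]
For $R \in [R^*, R_0]$, the prefactor $1/R^4$ is bounded above by $1/(R^*)^4$, so the estimate is uniform in $R$ over the target inertial range. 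By strong $L^3$ convergence $\bfu^\nu \to \bfu$, the right-hand side tends to zero as $\nu \to 0$, so I can select $\nu^* > 0$ small enough that the perturbation is strictly dominated by the slack reserved in the preliminary step for every $\nu \in (0, \nu^*]$ and every $R \in [R^*, R_0]$. This delivers the asserted bounds with constant $K_\gamma = K/(1-\gamma)$.

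The principal obstacle—and the structural reason the inertial range must be truncated at $R^*$—is the $1/R^4$ factor in Lemma 3.1: for any fixed $\nu > 0$, the perturbation estimate diverges as $R \to 0$, so the Onsager-type unbounded inviscid cascade of Theorem 3.1 cannot be transferred wholesale to the viscous setting. Instead, as $\nu \to 0$ one can take $\nu^*$ small for any prescribed $R^*$, and the viscous inertial range $[R^*, R_0]$ expands toward $(0, R_0]$—precisely the \emph{ever-expanding inertial ranges} advertised in the introduction.
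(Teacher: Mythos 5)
Your proposal is correct and follows essentially the same route as the paper's own proof: create slack by applying Theorem \ref{inviscid_thm} at a perturbed $\gamma$ on all of $(0,R_0]$, then average the estimate of Lemma \ref{conv_lem} over a fixed $(K_1,K_2)$-cover to get $\langle\Phi^\nu\rangle_R \to \langle\Phi\rangle_R$ uniformly on $[R^*,R_0]$ (the $1/R^4$ factor bounded by $1/(R^*)^4$), and take $\nu^*$ small enough that the perturbation fits inside the slack. Worth noting: you perturb in the correct direction (a $\gamma_0<\gamma$ with $\gamma_0 > c\,\tau_T^2/R_0^2$, which gives the strict lower margin above $(1-\gamma)\varepsilon/K$), whereas the paper as printed writes $\gamma:=\gamma+\delta$ with $0<\delta<c\,\tau_T^2/R_0^2-\gamma$ --- an empty range under the hypothesis, evidently a sign slip for $\gamma-\delta$, $0<\delta<\gamma-c\,\tau_T^2/R_0^2$ --- so your version is the repaired form of the same argument.
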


\begin{proof} Note that the inviscid cascade condition allows us to apply Theorem
\ref{inviscid_thm} with $\gamma:=\gamma+\delta$,
$0<\delta<c\tau_T^2/R_0^2\,-\,\gamma$, obtaining
\[
\frac{1}{K}{(1-\gamma-\delta)} \, \varepsilon \leq \langle\Phi\rangle_R \leq
\frac{K}{1-\gamma-\delta} \,  \varepsilon
\]
for any $R$, $0<R\leq R_0$. On the other hand, (\ref{meep}) implies
that for a fixed $(K_1,K_2)$-cover
\[\lgl\Phi^{\nu}\rgl_R\to\lgl\Phi\rgl_R \, ,\quad \mbox{as}\quad \nu\to 0\;\]
\emph{uniformly} on $[R^*,R_0]$ for any $R^*$, $0<R^*<R_0$.
Combining the two estimates finishes the proof.
\end{proof}

\begin{obs}{\em In fact, it is plain to verify that for all $\nu$, $0<\nu<\nu^*$, the energy cascade
\[
\frac{1}{K}(1-\gamma)\,\varepsilon^{\nu}\le\langle\Phi^{\nu}\rangle_R\le K(1+\gamma)\,\varepsilon^{\nu}
\]
holds inside the ever-expanding inertial ranges
$\displaystyle{\left[\left(\frac{(\tau^\nu_T)^2}{\frac{\gamma}{c}(R_0^2+\nu
T)-(\tau^{\nu}_T)^2}\,\nu T\right)^{1/2}, \, R_0\right]\; .}$ This
provides an \emph{intrinsic version of Theorem 4.1} -- without an
explicit reference to the vanishing viscosity limit (of course,
$\varepsilon^\nu \to \varepsilon$). }\end{obs}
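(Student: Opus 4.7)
The plan is to apply Theorem \ref{viscous_thm} directly to each $\bfu^\nu$. The output of that theorem is precisely the displayed cascade inequality, and its inertial range is exactly the one announced in the remark, so the entire task reduces to verifying the single hypothesis $(\tau_T^\nu)^2 < \frac{\gamma}{c} R_0^2$ for all sufficiently small $\nu$. Since this is a strict inequality and its inviscid analogue $\tau_T^2 < \frac{\gamma}{c} R_0^2$ is assumed in the setting of Theorem \ref{diss_anomaly}, it suffices to prove $(\tau_T^\nu)^2 \to \tau_T^2$ as $\nu \to 0$ and then shrink $\nu^*$ (if necessary, beyond the one produced in Theorem \ref{diss_anomaly}) to enforce the inequality.

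To prove $(\tau_T^\nu)^2 \to \tau_T^2$, I would inspect $(\tau_T^\nu)^2 = (R_0^2/T + \nu)\,\bfe^\nu/\varepsilon^\nu$ alongside $\tau_T^2 = (R_0^2/T)\,\bfe/\varepsilon$ and verify $\bfe^\nu \to \bfe$ and $\varepsilon^\nu \to \varepsilon$ separately. All relevant integrals are supported in the bounded spacetime box $B(\bfo, 2R_0) \times (0, 2T)$, so strong $L^3$ convergence of $\bfu^\nu \to \bfu$ upgrades to strong $L^2$ convergence on that box by H\"older; this handles both the energy density $\iint \tfrac{1}{2}|\bfu^\nu|^2 \phi^{2\delta-1}$ and the $\partial_t \phi$ contribution to $\varepsilon^\nu$. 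The $\nu \triangle \phi$ contribution is controlled by $C\nu/R_0^2$ times the (uniformly bounded) $L^2$ norm of $\bfu^\nu$ on the support, hence vanishes. Finally, the flux piece $\hat{\Phi}^\nu_{\bfo, R_0, T} \to \hat{\Phi}_{\bfo, R_0, T}$ is immediate from Lemma \ref{conv_lem} applied at $\bfx_0 = \bfo$, $R = R_0$ and integrated in time. Combining these yields $\bfe^\nu \to \bfe$, $\varepsilon^\nu \to \varepsilon$, and hence $(\tau_T^\nu)^2 \to \tau_T^2$.

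With the hypothesis of Theorem \ref{viscous_thm} verified for all $\nu < \nu^*$, the viscous cascade with constants $K$ and $\gamma$ on the explicit $\nu$-dependent inertial range follows at once. To confirm the ``ever-expanding'' description, one rewrites the left endpoint as $\bigl((\tau_T^\nu)^2\,\nu T\bigr)^{1/2}\bigl[\tfrac{\gamma}{c}(R_0^2 + \nu T) - (\tau_T^\nu)^2\bigr]^{-1/2}$ and observes that the numerator is $O(\sqrt{\nu})$ while the bracket in the denominator converges to $\tfrac{\gamma}{c}R_0^2 - \tau_T^2 > 0$, so the endpoint shrinks to zero and the inertial range exhausts $(0, R_0]$ in the inviscid limit.

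The main obstacle is the convergence $\varepsilon^\nu \to \varepsilon$, since it aggregates three heterogeneous pieces and implicitly leans on the Duchon--Robert fact that the inviscid limit $\bfu$ inherits the local energy inequality (quoted from \cite{DR00} at the outset of this section). All three pieces have already been prepared in the paper---the quantitative flux bound is Lemma \ref{conv_lem}, the uniform $L^2$ bound on $\bfu^\nu$ is standard for weak NSE solutions, and the $L^3$-to-$L^2$ upgrade is just H\"older on the bounded support---which is precisely why the remark characterizes the argument as ``plain to verify.''
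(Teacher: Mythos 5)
Your proposal is correct and takes essentially the same route the paper intends: the remark is left as ``plain to verify'' precisely because, once one knows $\bfe^\nu\to\bfe$ and $\varepsilon^\nu\to\varepsilon$ (the paper's parenthetical signals exactly this, with the flux part supplied by Lemma \ref{conv_lem} and the rest by strong $L^3$ convergence on the bounded support), one gets $(\tau_T^\nu)^2\to\tau_T^2<\frac{\gamma}{c}R_0^2$, so Theorem \ref{viscous_thm} applies verbatim to each $\bfu^\nu$ with $\nu$ small, yielding the displayed cascade on the stated $\nu$-dependent range. Your additional observations --- shrinking $\nu^*$ to enforce the strict inequality, using $\varepsilon>0$ implicit in the inviscid cascade condition to justify division by $\varepsilon^\nu$, and checking that the left endpoint is $O(\sqrt{\nu})$ so the ranges expand to $(0,R_0]$ --- are exactly the details the paper suppresses.
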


\medskip

\noindent ACKNOWLEDGEMENTS \ The authors thank the anonymous referee
for a number of suggestions that improved the quality of the
presentation.

\end{document}